\newtheorem{lem}{Lemma}[section]
\newtheorem{thm}{Theorem}[section]
\theoremstyle{definition}
\theoremstyle{remark}
\theoremstyle{remark}
\newtheorem{remark}{Remark}[section]
\numberwithin{equation}{section}
\newcommand{\C}{{\mathbb C}}
\newcommand{\R}{{\mathbb R}}
\definecolor{blu}{rgb}{0,0,1}
\def\im{{\rm i}}
\newcommand{\vertiii}[1]{{\left\vert\kern-0.25ex\left\vert\kern-0.25ex\left\vert #1
    \right\vert\kern-0.25ex\right\vert\kern-0.25ex\right\vert}}
\begin{document}
\title[Traveling waves for the  quartic Half Wave]
{Traveling waves for the  quartic focusing Half Wave equation in one space dimension}
\author{Jacopo Bellazzini}
\address{J. Bellazzini,
\newline  Universit\`a di Sassari, Via Piandanna 4, 70100 Sassari, Italy}%
\email{jbellazzini@uniss.it}%
\author{Vladimir Georgiev}
\address{V. Georgiev
\newline Dipartimento di Matematica Universit\`a di Pisa
Largo B. Pontecorvo 5, 56100 Pisa, Italy\\
 and \\
 Faculty of Science and Engineering \\ Waseda University \\
 3-4-1, Okubo, Shinjuku-ku, Tokyo 169-8555 \\
Japan and IMI--BAS, Acad.
Georgi Bonchev Str., Block 8, 1113 Sofia, Bulgaria}%
\email{georgiev@dm.unipi.it}%
\author{Nicola Visciglia}
\address{N. Visciglia, \newline Dipartimento di Matematica Universit\`a di Pisa
Largo B. Pontecorvo 5, 56100 Pisa, Italy}%
\email{viscigli@dm.unipi.it}
\begin{abstract}
We consider the quartic focusing Half Wave equation (HW) in one space dimension. We show first that
that there exist  traveling wave solutions  with arbitrary small $H^{\frac 12}(\R)$ norm.
This fact shows that small data scattering is not possible for (HW) equation and that below the ground state energy there are solutions  whose energy travels as a localised packet and which preserve this localisation in time.  This behaviour   for (HW)  is in sharp contrast with classical
NLS in any dimension and with fractional NLS with radial data. The second result addressed is the non existence of traveling waves moving at the speed of light. The main ingredients of the proof are commutator estimates and a careful study of spatial decay of traveling waves profile using the harmonic extension to the upper half space.
\end{abstract}

\maketitle
Aim of this paper is to consider the Half Wave equation in one space dimension, (HW) since now on, with quartic nonlinearity
\begin{equation}\label{eq:main}
i \partial_t u =\sqrt{-\Delta }u - u|u|^{3},\quad (t,x)\in \R\times \R
\end{equation}
where $\sqrt{-\Delta }$ stands for the fractional Laplacian, namely  $\mathcal{F}(\sqrt{-\Delta }f)=|\xi|\mathcal{F}(f)$.
We recall that (HW) enjoy respectively
the conservation of the following energy:
\begin{equation}\label{eq:varprob2}
{\mathcal E}_{hw}(u)=\frac{1}{2}\|u\|_{\dot H^{\frac12}(\R)}^2-\frac{1}{5}\| u  \|^{5}_{L^{5}(\R)},
\end{equation}
as well as the conservation of the mass ${\mathcal M}(u)$, namely:
\begin{equation}\label{masscons} \frac d{dt} \|u(t, x)\|_{L^2(\R)}^2=0.
\end{equation}
Concerning the Cauchy problem associated with \eqref{eq:main} it has been proved in \cite{OV} the existence of global solutions for initial data  in $H^1(\R)$ and  $\sup_{(0,T)}||u||_{H^\frac 12(\R)}<\infty$,  for every $ T$. As a consequence of \eqref{eq:varprob2} and \eqref{masscons}
global existence is guaranteed for initial data in $H^1(\R)$ with small assumptions on $H^{\frac 12}(\R)$.\\
The aim of the paper is to prove  existence/non existence  results  for a class of solutions  whose energy travels as a localised packet and which preserve this localisation in time: the  \emph{traveling waves}. As a byproduct of our existence results and qualitative properties of traveling waves we show that small data scattering cannot occur for one dimensional quartic (HW).\\
To establish the existence of standing waves solutions $\psi_q(t,x)=e^{i t}q(x)$ for (HW) equation is not difficult to prove, see \cite{FrL}. The classical strategy introduced by Weinstein is to maximize a suitable functional whose critical points correspond to the standing waves. For (HW) equation one easily verifies that the map
$$u(t,x)\rightarrow \lambda_0^{\frac{1}{3}}u(\lambda_0 t+t_0, \lambda_0 x+x_0)e^{i \gamma_0}, \ \ (\lambda_0,t_0,x_0, \gamma_0)\in \R^{+}\times \R \times \R\times \R$$
yields a group of symmetries.
This implies trivially that if $\psi_q(t,x)=e^{i t}q(x)$ is the ground state solution to \eqref{eq:main} with mass $||q||_{L^2(\R)}^2=M_0$ and energy ${\mathcal E}_{hw}(\psi_q)=E_0$,  then $\psi_{q, \lambda_0}=\lambda_0^{\frac{1}{3}}q(\lambda_0x)e^{i \lambda_0 t}$ is another ground state solution with mass  $||\psi_{q, \lambda_0}||_{L^2(\R)}^2= \lambda_0^{-\frac{1}{3}}M_0$ and energy ${\mathcal E}_{hw}(\psi_{q, \lambda_0})= \lambda_0^{\frac{2}{3}}E_0$.  The aforementioned scaling  computation shows  that
\begin{equation}\label{eq:asymp}
{\mathcal E}_{hw}(\psi_{q, \lambda_0}){\mathcal M}(\psi_{q, \lambda_0})^{2}=const.
\end{equation}
In particular when the mass of the ground state goes to zero the energy of the ground state increses  as the inverse square of the mass. We recall that the term \emph{ground state} here indicates any positive standing wave solution to  \eqref{eq:main} even, nonnegative that vanish
at infinity. Fixing the value of $\lambda_0$ the ground state is unique, see \cite{FrL}. Notice that the ground state for quartic (HW) does not  minimize the energy with a mass constraint (indeed the energy for quartic (HW) is unbounded from below even with a mass constraint).\\
We shall underline that (HW) equation differs, for instance,  from NLS equation and nonlinear Klein-Gordon equation due to the lack of an explicit formula to construct traveling waves from standing waves. Indeed for NLS and NLKG equations  thanks to Galilean and  Lorentz invariance, the existence of traveling waves solutions can be straightforward obtained simply applying  a Galilean or Lorentz
boost to the standing waves. This is not the case for HW equation.\\ In \cite{FL2}, in the context of Boson star equation, i.e. considering a Hartree type nonlinearity, the existence
of traveling waves  for HW with the ansatz $u(x,t)=e^{i t}q_{v}(x-vt)$ has been investigated. More recently it has been proved in \cite{KLR} the existence of travelling waves  for the cubic one dimensional Half Wave equation with arbitrary small mass, i.e small $L^2$ norm.
This result  is important because it is a peculiarity of the (HW) equation. As an example of this peculiarity, for  classical $L^2$ critical NLS below the minimal mass ground state, all the initial data scatter to the free evolution equation \cite{D}.
Let us notice that traveling wave solutions $u(x,t)$ to \eqref{eq:main} with the ansatz $u(x,t)=e^{i t}Q_{v}(x-vt)$ fulfill the equation
\begin{equation}\label{eq:diffeq2}
\sqrt{-\Delta }Q_v +i v \partial_x Q_v+ Q_v- |Q_v|^{3}Q_v=0,
\end{equation}
and therefore the proof of their existence can be obtained maximizing again a suitable Weinstein-like functional, see e.g. \cite{KLR}.\\
For simplicity we will consider the case $v>0$, being the negative case identical.
The first result is   to prove the following theorem concerning the existence and the asymptotics when $v \rightarrow 1^{-}$
of traveling waves of the form $$\psi=e^{i  (1-v)t}Q_{v}(x-vt),$$ where $Q_v$ satisfies
\begin{equation}\label{eq:diffeq2a}
\sqrt{-\Delta }Q_v +i v \partial_x Q_v+  (1-v)Q_v- |Q_v|^{3}Q_v=0.
\end{equation}
 Notice that the dependence on $v$ is both on the phase and space shift.
\begin{thm}\label{maintheorem}
For any $0<v<1$ there exists $Q_{v}\in  H^{\frac{1}{2}}(\R)$ such that \newline $e^{i  (1-v)t}Q_{v}(x-vt)$ solves \eqref{eq:diffeq2a} and such that
\begin{equation}\label{eq:rel}
||Q_v||_{L^2(\R)} \sim (1-v)^{\frac 13},   \ \|Q_v\|_{H^{\frac 12}(\R)} \leq C (1-v)^{\frac 13}, \ \ \|Q_v\|_{H^{1}(\R)} \leq C (1-v)^{\frac 13}.
\end{equation}
In particular
 for $v$ sufficiently close to 1 the energy of the traveling wave is below the energy of the ground state with the same mass. Moreover, given $0 < v_1 < v_2 < 1$ with  $$v_1 = 1 - \varepsilon, \ \ v_2 = v_1 + \delta, \ \ 0 < \delta \ll \varepsilon \ll 1,$$ we have
 \begin{equation}\label{eq.rel1}
    ||Q_{v_1}-Q_{v_2}||_{L^2(\R)} \leq C \delta (1-v_1)^{\frac 13}.
 \end{equation}
In particular small data scattering does not occur.
\end{thm}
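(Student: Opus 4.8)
The plan is to construct $Q_v$ by a Weinstein-type constrained minimization and then read the asymptotics off the degeneration of the linear symbol as $v\to 1^-$. Writing the linear part of \eqref{eq:diffeq2a} as the Fourier multiplier with symbol $m_v(\xi)=|\xi|-v\xi+(1-v)$, one records the two-sided bound that is the engine of the whole argument: $m_v(\xi)\geq (1-v)(1+|\xi|)$ for every $\xi$, while on the positive semi-axis $m_v(\xi)=(1-v)(1+\xi)$ is of size \emph{exactly} $(1-v)$. I would therefore minimize the coercive quadratic form $\mathcal L_v(u)=\int_\R m_v(\xi)|\hat u(\xi)|^2\,d\xi$ on the constraint $\{\|u\|_{L^5(\R)}=1\}$, calling $I_v$ the infimum and $\phi_v$ a minimizer. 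Existence of $\phi_v$ follows by concentration--compactness: minimizing sequences are bounded in $H^{\frac12}(\R)$ by coercivity, vanishing is excluded by the embedding $H^{\frac12}(\R)\hookrightarrow L^5(\R)$, and dichotomy is excluded by the strict subadditivity $I(c):=\inf\{\mathcal L_v(u):\|u\|^5_{L^5(\R)}=c\}=c^{2/5}I_v$, which comes from the mismatch between the quadratic form and the quintic constraint. The Euler--Lagrange equation is $\mathcal L_v\phi_v=I_v|\phi_v|^3\phi_v$, so that $Q_v:=I_v^{1/3}\phi_v$ solves \eqref{eq:diffeq2a} exactly.

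The asymptotics \eqref{eq:rel} come from pinning down $I_v$. Testing $\mathcal L_v$ on a fixed profile with Fourier support in $\{\xi>0\}$ gives $I_v\leq C(1-v)$, while the lower bound $m_v(\xi)\geq(1-v)(1+|\xi|)$ together with the Gagliardo--Nirenberg inequality $\|u\|^5_{L^5(\R)}\leq C\|u\|^3_{\dot H^{\frac12}(\R)}\|u\|^2_{L^2(\R)}$ gives $I_v\geq c(1-v)$; hence $I_v\sim (1-v)$. Coercivity then forces $\|\phi_v\|_{H^{\frac12}(\R)}\leq C$, and the same Gagliardo--Nirenberg inequality applied to $\|\phi_v\|_{L^5(\R)}=1$ forces $\|\phi_v\|_{L^2(\R)}\geq c$, so $\|\phi_v\|_{H^{\frac12}(\R)}\sim 1$; since $\|Q_v\|_X=I_v^{1/3}\|\phi_v\|_X$ this yields the $L^2$ and $H^{\frac12}$ parts of \eqref{eq:rel}. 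For the $H^1$ bound I would invert the symbol in the Euler--Lagrange equation, using $|\xi|\,|\hat\phi_v(\xi)|=\tfrac{|\xi|}{m_v(\xi)}\,I_v\,|\widehat{|\phi_v|^3\phi_v}(\xi)|\leq \tfrac{I_v}{1-v}\,|\widehat{|\phi_v|^3\phi_v}(\xi)|$: since $I_v\sim (1-v)$ the dangerous factor cancels and $\|\phi_v\|_{\dot H^1(\R)}\leq C\|\phi_v\|_{L^8(\R)}^4\leq C$ by $H^{\frac12}(\R)\hookrightarrow L^8(\R)$. The energy comparison is then immediate: $\mathcal E_{hw}(Q_v)\sim \tfrac12\|Q_v\|^2_{\dot H^{\frac12}(\R)}\sim (1-v)^{2/3}\to 0$, whereas by the scaling identity \eqref{eq:asymp} the ground state of the same mass $\mathcal M(Q_v)\sim (1-v)^{2/3}$ has energy $\sim (1-v)^{-4/3}\to+\infty$. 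Finally, because $e^{i(1-v)t}Q_v(x-vt)$ is an exact solution with $\|Q_v\|_{H^{\frac12}(\R)}\to 0$ whose $L^5(\R)$ norm stays bounded below along the flow, these arbitrarily small data cannot scatter to the free evolution.

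For the stability estimate \eqref{eq.rel1} I would subtract the two profile equations: setting $W=Q_{v_1}-Q_{v_2}$ and $\delta=v_2-v_1$ gives $\mathcal L_{v_1}W=i\delta\,\partial_x Q_{v_2}-\delta\,Q_{v_2}+\big(|Q_{v_1}|^3Q_{v_1}-|Q_{v_2}|^3Q_{v_2}\big)$, whose source has Fourier transform $-\delta(1+\xi)\hat Q_{v_2}(\xi)$ and $L^2(\R)$ norm $\leq C\delta(1-v_1)^{1/3}$ by the $H^1$ bound; equivalently, differentiating \eqref{eq:diffeq2a} in $v$ yields $\mathcal M_v\,\partial_v Q_v=Q_v-i\,\partial_x Q_v$, where $\mathcal M_v=\sqrt{-\Delta}+iv\partial_x+(1-v)-L_v$ is the operator linearized at $Q_v$ and $L_v$ is the real-linear derivative of $u\mapsto |u|^3u$, so that \eqref{eq.rel1} reduces, after integration in $v$, to the uniform bound $\|\partial_v Q_v\|_{L^2(\R)}\leq C(1-v)^{1/3}$. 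Here lies the main obstacle. A crude use of coercivity $\mathcal L_{v_1}\gtrsim (1-v_1)\|\cdot\|^2_{L^2(\R)}$ loses a full power of $(1-v_1)$ and only gives $\|W\|_{L^2(\R)}\leq C\delta(1-v_1)^{-2/3}$, because $L_v$ has operator norm $\sim\|Q_v\|^3_{L^\infty(\R)}\sim(1-v)$, exactly borderline with the coercivity constant, while the singular part of $\mathcal L_v^{-1}$ concentrates near $\xi=0$ precisely where the symbol degenerates. Recovering the sharp power requires working with $\mathcal M_v$ rather than $\mathcal L_v$: one modulates out the symmetry kernel $\mathrm{span}_\R\{iQ_v,\partial_x Q_v\}$ and proves that $\mathcal M_v$ is invertible on its orthogonal complement with norm that stays \emph{uniformly bounded as} $v\to 1^-$, so that the source $Q_v-i\,\partial_x Q_v$ — controlled in $L^2(\R)$ by $(1-v)^{1/3}$, and with its dangerous low-frequency part absorbed by the phase direction — is mapped to something of the same size. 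Establishing this uniform coercivity of the linearized operator, for which no explicit profile is available, is the delicate point, and it is exactly where the commutator estimates and the harmonic-extension analysis of the spatial decay of $Q_v$ announced in the abstract must enter.
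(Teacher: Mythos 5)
Your construction of $Q_v$ and the proof of \eqref{eq:rel} are sound and close in substance to the paper's: the paper maximizes a Weinstein quotient (with an extra $\|u\|_{L^2(\R)}^2$ factor in the denominator, followed by a two-parameter rescaling) rather than minimizing $\mathcal{L}_v$ on the $L^5$-sphere, but both arguments turn on the same two facts --- the symbol bound $|\xi|-v\xi+(1-v)\geq (1-v)(1+|\xi|)$, with equality on $\xi>0$, tested against positive-frequency profiles and combined with Gagliardo--Nirenberg after a frequency splitting to get the two-sided bound on the best constant, and the $O((1-v)^{-1})$ resolvent bound for the $H^{1/2}$ and $H^1$ estimates. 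This part I would accept.

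The genuine gaps are in the last two assertions. For \eqref{eq.rel1} you correctly flag the borderline bookkeeping (a resolvent of size $(1-v)^{-1}$ against a nonlinear coefficient of size $\|Q_v\|_{L^\infty(\R)}^3$), but you then leave the decisive step --- uniform-in-$v$ coercivity of the modulated linearized operator $\mathcal{M}_v$ --- entirely unproven, and you misattribute the needed input to the commutator and harmonic-extension machinery, which the paper uses only for the $v=1$ nonexistence theorem. The paper's actual proof is much more elementary and uses no modulation and no spectral information about the linearization: writing $R=Q_{v_1}-Q_{v_2}$, it solves $R=A_{v_1}(F)$, where $F$ consists of an explicit $O(\delta)$ source controlled by the already-established bounds \eqref{eq:rel}, plus a term of the form $(\text{difference quotient})\cdot R$ whose coefficient is bounded pointwise by $C(|Q_{v_1}|^3+|Q_{v_2}|^3)$ and hence, after $L^1(\R)\hookrightarrow H^{-1}(\R)$ and Sobolev embedding, contributes $C(1-v_1)\|R\|_{L^2(\R)}$, absorbed on the left since $1-v_1=\varepsilon\ll 1$. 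Second, your non-scattering argument (``the $L^5$ norm stays bounded below, hence no scattering'') presupposes that scattering solutions decay in $L^5(\R)$, which is false here: $e^{it|D|}$ in one dimension is a superposition of two translations and has no dispersive decay --- this is exactly the obstruction singled out in the paper's first remark. The paper instead derives a contradiction from the hypothetical $L^2$-continuity of the wave operators: two traveling waves with speeds $v_1\neq v_2$ become asymptotically orthogonal, so that $\|\psi_1(t)-\psi_2(t)\|_{L^2(\R)}^2\sim\|Q_{v_1}\|_{L^2(\R)}^2+\|Q_{v_2}\|_{L^2(\R)}^2\gtrsim(1-v_1)^{2/3}$, while the initial profiles are $\delta(1-v_1)^{1/3}$-close by \eqref{eq.rel1}. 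This is precisely why \eqref{eq.rel1} is an essential ingredient of the theorem and not an optional refinement; without it (and without a correct replacement for the $L^5$-decay heuristic) the final claim of the theorem is not established.
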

%

\begin{remark}
Notice that the lack of small data scattering should follow, as a matter of fact,  as a straightforward consequence of the existence of small traveling waves. Neverthless the rigorous proof of this implication requires some additional efforts since in our one dimensional context the (HW) equation does not enjoy nice decay estimates.
\end{remark}
\begin{remark}
The condition $0<v<1$ guarantees that the quadratic form associated with \eqref{eq:diffeq2a} fulfills for $u \neq0$
$$\|u\|_{\dot H^{\frac 12}(\R)}^2+ i  v \int_\R \bar u   \partial_x  u  dx> 0.$$
\end{remark}
We shall underline that our result confirms the peculiarity of (HW) with respect, for instance, to $L^2$ supercritical NLS.
For NLS, fixed the $L^2$ norm of the initial datum, and assuming that the energy of the wave is below the ground state energy, then the long time dynamics is characterized by only two possible alternatives:
either scattering to the free equation or blow up in finite time. The classical approach to show this alternative goes back to \cite{KM}. For (HW) we show that this situation cannot occur.  Moreover we underline that the dynamics of (HW) in one dimension is different from $L^2$ supercritical fractional NLS, i.e if one substitute the operator $\sqrt{-\Delta}$ with $(-\Delta)^{\frac{s}{2}}$ and $s>\frac 12$. Indeed for fractional NLS it is still true in dimension $n\geq 2$ and radial data the alternative  between scattering to the free equation or blow up in finite time for data with energy below the ground state energy, see \cite{BHL} for blow up and \cite{SZ} for scattering (only if $\frac{3}{4}<s<1$).
For a first attempt to describe the dynamics for (HW) equation in high dimension we quote \cite{BGV}. When $n\geq 2$ and radial data
the blow-up in finite time  is still an open question.\\
\\
The second contribution of this paper is to discuss the nonexistence of  traveling waves solutions  with arbitrary frequency moving  at the limit speed $v=1$. 

\begin{thm}\label{thm:nofast}
For any $\omega\in \R$ it does not exist a traveling wave solution  to \eqref{eq:main} given by
$u(x,t)=e^{i \omega t}Q_{1}(x-t)$ with $Q_1\in H^{\frac{1}{2}}(\R)$.
\end{thm}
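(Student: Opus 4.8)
The plan is to argue by contradiction. Assume $Q_1\in H^{1/2}(\R)$ with $Q_1\not\equiv 0$ solves the profile equation obtained by substituting $u=e^{i\omega t}Q_1(x-t)$ into \eqref{eq:main}, namely
\[
\sqrt{-\Delta}\,Q_1+i\partial_x Q_1+\omega Q_1=|Q_1|^3Q_1 .
\]
The structural observation driving everything is that the Fourier multiplier of $\sqrt{-\Delta}+i\partial_x$ is $|\xi|-\xi$, which vanishes identically on $\{\xi\ge 0\}$; this degeneracy is the analytic signature of the limit speed $v=1$ and the origin of all the difficulty. Splitting $Q_1=Q_++Q_-$ with $Q_\pm=\Pi_\pm Q_1$ the projectors onto $\{\pm\xi>0\}$, the full multiplier $|\xi|-\xi+\omega$ equals the constant $\omega$ on $\{\xi>0\}$ and $2|\xi|+\omega$ on $\{\xi<0\}$.

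First I would establish spatial decay of the hypothetical solution, the step that makes every subsequent integration by parts legitimate. On negative frequencies the symbol $2|\xi|+\omega$ behaves, at least for $\omega>0$, like an elliptic/massive operator, so $Q_-$ inherits fast decay; the delicate part is $Q_+$, for which the linear operator provides no smoothing at all, its symbol being the constant $\omega$ there. I would control $Q_+$ through the relation $\omega Q_+=\Pi_+(|Q_1|^3Q_1)$ and through its holomorphic (Hardy) structure, bootstrapping from $Q_1\in H^{1/2}\hookrightarrow\cap_{p<\infty}L^p$: the quartic nonlinearity is far more integrable than $Q_1$ in the tails, and the Caffarelli--Silvestre harmonic extension $U(x,y)$ to the upper half-space, for which $\sqrt{-\Delta}\,Q_1=-\partial_y U|_{y=0}$, turns the nonlocal problem into a local boundary problem where decay into the tails can be tracked. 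I expect this to be the main obstacle: the positive-frequency part has no linear decay mechanism, so its decay must be extracted entirely from the nonlinear coupling and the analyticity encoded by the extension.

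With sufficient decay in hand, I would prove the \emph{modulation identity} $\int \mathrm{sgn}(\xi)\,|\widehat{Q_1}(\xi)|^2\,d\xi=\|Q_1\|_{L^2}^2$, i.e.\ $\|Q_-\|_{L^2}=0$, so that $Q_1=Q_+$ is purely positive-frequency. This comes from pairing the equation with $ixQ_1$ (equivalently, differentiating the action along the modulation family $e^{icx}Q_1$): the $\omega$-term and the nonlinear term are purely imaginary and drop out, the term $\mathrm{Re}\langle i\partial_x Q_1,ixQ_1\rangle=-\tfrac12\|Q_1\|_{L^2}^2$ comes from an integration by parts, and the fractional term is handled by the commutator identity $[\sqrt{-\Delta},x]=-i\,\mathrm{sgn}(D)$, yielding $\mathrm{Re}\langle\sqrt{-\Delta}\,Q_1,ixQ_1\rangle=\tfrac12\int\mathrm{sgn}(\xi)\,|\widehat{Q_1}|^2$. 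Rigorously one inserts a cutoff $x\chi(x/R)$, estimates the commutator errors using the decay from the previous step, and lets $R\to\infty$.

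Once $Q_-=0$, the proof closes quickly. Since $\widehat{Q_1}$ is supported in $\{\xi\ge0\}$, where $|\xi|-\xi=0$, the operator $\sqrt{-\Delta}+i\partial_x$ annihilates $Q_1$ and the profile equation collapses to the pointwise relation $(\omega-|Q_1|^3)Q_1=0$; hence $|Q_1(x)|\in\{0,\omega^{1/3}\}$ a.e., which already forces $Q_1\equiv0$ when $\omega\le0$. For $\omega>0$, $Q_1$ is then the boundary value of a nonzero function in the Hardy space of the upper half-plane whose modulus takes only the values $0$ and $\omega^{1/3}$; but a nonzero Hardy-class function cannot vanish on a set of positive measure, while constant nonzero modulus on a set of infinite measure is incompatible with $Q_1\in L^2$. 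This contradiction forces $Q_1\equiv0$ for every $\omega\in\R$. As a complementary check, pairing with $\bar Q_1$ gives $2\|Q_-\|_{\dot H^{1/2}}^2+\omega\|Q_1\|_{L^2}^2=\|Q_1\|_{L^5}^5$ and the scaling (Pohozaev) identity gives $\|Q_1\|_{L^5}^5=\tfrac{5\omega}{2}\|Q_1\|_{L^2}^2$, whose combination $2\|Q_-\|_{\dot H^{1/2}}^2=\tfrac{3\omega}{2}\|Q_1\|_{L^2}^2$ rules out $\omega<0$ at once and is consistent with $Q_-=0$; note, however, that even these identities rest on the decay obtained via the harmonic extension, so that step remains the crux.
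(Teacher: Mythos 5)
Your overall architecture coincides with the paper's: (i) spatial decay of the profile via the harmonic extension to the upper half-plane, (ii) a commutator identity built on $[x,\sqrt{-\Delta}]=\pm i\,\mathrm{sgn}(D)$ forcing $\widehat{Q_1}$ to live in a half-line, (iii) a conclusion from the equation restricted to that half-line. Your step (ii) is a static version of the paper's dynamic one (the paper differentiates $\int x|u|^2\,dx$ along the flow, identifies the derivative with $v\|Q_1\|_{L^2}^2$, and exploits the equality case of Cauchy--Schwarz; you pair the profile equation with $ixQ_1$ --- the same identity), and yours is arguably cleaner since it pins down the half-line directly. Your endgame, however, is genuinely different: you place $\widehat{Q_1}$ where the multiplier $|\xi|-\xi$ \emph{vanishes}, so the equation degenerates to $\omega Q_1=|Q_1|^3Q_1$ and you invoke the non-vanishing a.e.\ of boundary values of Hardy-class functions; the paper places the support in the opposite half-line, where the operator becomes a first-order derivative, and finishes with $\partial_x|Q_1|^2=0$. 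With the convention $\mathcal{F}(\sqrt{-\Delta}f)=|\xi|\widehat{f}$ your sign (positive frequencies for a right-moving wave, matching the group velocity $\mathrm{sgn}(\xi)$ of the free half-wave flow) appears to be the consistent one, and in any case a complete proof must dispose of the degenerate branch, which the paper dismisses with a one-line ``scaling'' remark; so your Hardy-space argument is a genuine contribution rather than a detour.

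The genuine gap is step (i). You correctly identify the decay of $Q_1$ as the crux --- it is the hypothesis legitimizing every subsequent integration by parts --- but you only sketch it. The paper's Lemma \ref{lem: decay} does the real work: it constructs the explicit Green function $G(x,y)$ of the harmonic-extension boundary problem, shows via two integrations by parts on the Fourier side that the $O(1/|x|)$ contributions of the two frequency half-lines cancel so that $|G(x,0)|\lesssim |x|^{-2}$, and then runs an Amick--Toland contraction argument in weighted spaces to transfer this rate to $Q_1$. Your proposed substitute, $\omega Q_+=\Pi_+(|Q_1|^3Q_1)$, does not obviously deliver enough: the Riesz projection of an integrable function with nonzero integral generically decays only like $1/|x|$, which is borderline both for $x|Q_1|^2\in L^1(\R)$ and for killing the cutoff errors in your modulation identity; the cancellation that upgrades $1/|x|$ to $1/|x|^2$ is exactly what the Green-function computation establishes and what your sketch leaves unproved. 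Until a decay rate strictly better than $1/|x|$ is in hand, the modulation identity --- and hence the whole argument --- is not justified.
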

\begin{remark}
We shall notice  that the non existence  of traveling wave solutions moving at  the speed of light is not elementary to prove. As a matter of fact  for $v=1$ the Fourier multiplier  $|\xi|-\xi$ in the kinetic term is no more positive  but it does not imply with elementary arguments that traveling waves cannot exist.\\
In fact the  crucial step in the proof is given by the spatial decay estimate of $Q_1$ (assuming the existence) together with commutator estimates. More precisely our approach is given by the following steps:

\begin{enumerate}
\item proving that any traveling wave at the speed of light decays $O(\frac{1}{|x|^2})$; 
\item thanks to (1) showing that at the speed of light supp $\hat Q_v\subset (-\infty, 0]$;
\item noticing that (2) implies that the Fourier multipliers are nothing but classical derivatives;
\item using the equation to conclude that $Q_v=0$.
\end{enumerate}
The proof of the decay of traveling waves at the speed of light is inspired by the celebrated work of Amick-Toland \cite{AT} in the context con Benjamin-Ono equation  (see also \cite{KMR}), while  the localization of frequences in the half space is a consequence of commutator estimates.\\
It is interesting to underline how   the \emph{non existence} of traveling waves at the speed of light for (HW) is  strongly correlated to the \emph{existence} of traveling waves with arbitrary speed for   Szeg\H{o} equation
\begin{equation}\label{eq:sz}
i \partial_t \psi =\Pi_{-}(|\psi|^3\psi),
\end{equation}
where $\Pi_{-}$ is the Szeg\H{o} projector onto negative frequencies. 
Indeed by  minimizing a suitable Weinstein functional a traveling wave solution $\psi(t,x)=e^{-i t}\tilde q_v(x-vt)$ for Szeg\H{o} equation  can be obtained for any $v< 0$, see \cite{PO} in the cubic case. These traveling waves can be rescaled in order to solve the equation
\begin{equation}\label{eq:sz1} 
 v D \tilde q_v + \tilde q_v- \Pi_{-}(|\tilde q_v|^{3}\tilde q_v)=0.
 \end{equation}
On the other hand the traveling waves at the speed of light  for (HW)  have frequency localization in the negative half space  and hence for those solutions 
the operator $\sqrt{-\Delta } +i  \partial_x $ coincides with $-2 D$.  We underline the interesting fact that for eq. \eqref{eq:sz1} and $v=-2$ solutions exist while solutions  at the speed of light  for (HW)  for $v=1$ does not. 
The key point in our non existing argument concerns the decay of traveling wave that implies, thanks to commutator estimates,  the frequency localization. The same argument cannot be applied for   Szeg\H{o} equation. In \cite{GLPR} it is noticed that 
while traveling waves moving at the speed of light for cubic Szeg\H{o} equation decay like $\frac{1}{<x>}$, the traveling waves with $0<v<1$ for cubic (HW) decay as $\frac{1}{x^2}$. Our aforementioned step (1) (Lemma \ref{lem: decay}) proves that at the speed of light  traveling waves for (HW) 
(independently on the value of exponent of the nonlinearity) shall decay as in case $0<v<1$. As a byproduct we proved that this decay implies, in fact, that such waves cannot exist. 
\end{remark}
\begin{remark}
It is immediate to notice that if  $u(x,t)=e^{i \omega t}Q_{1}(x-t)$ is a traveling wave solution at the speed of light with $Q_1\in H^{\frac{1}{2}}(\R)$, then $Q_1$ fulfills two additional properties: $\lim_{|x|\rightarrow \infty}|Q_1(x)|=0$ and $Q_1$ bounded. These two properties that follows from the fact that $Q_1\in H^1(\R)$ and are crucial to prove the non existence result.
\end{remark}
\section{A remark about travelling waves for NLS}
Consider the classical NLS
\begin{equation}\label{eq:mainNLS}
i \partial_t u =-\Delta u - u|u|^{p-1},\quad (t,x)\in \R\times \R^n
\end{equation}
We recall that NLS enjoy respectively
the conservation of the following energy:
\begin{equation}\label{eq:enls}
{\mathcal E}_{nls}(u)=\frac{1}{2}\|u\|_{\dot H^{1}(\R^n)}^2-\frac{1}{p+1}\| u  \|^{p+1}_{L^{p+1}(\R^n)},
\end{equation}
as well as the conservation of the mass,
\begin{equation}\label{massconsnls} \frac d{dt} \|u(t, x)\|_{L^2(\R^n)}^2=0
\end{equation}
and the conservation of the momentum
\begin{equation}\label{momentumnls}
P(u(x,t))= \left(-i \int_{\R^n} \bar u    \nabla  u  dx\right).
\end{equation}

It is well known that the standing wave $\psi_{\omega}(x,t)=e^{i\omega t}Q_{\omega}(x)$, where $Q_{\omega}$ is the positive ground state solution that solves
\begin{equation}\label{eq:ellit}
-\Delta w + \omega w- |w|^{p-1}w=0,
\end{equation}
is solution to \eqref{eq:mainNLS}. \\
On the other hand, thanks to the Galilean transform we know that given $\psi(t,x)$ an arbitrary  solution to \eqref{eq:mainNLS} and $v \in \R^n$ then
$$\psi_v(t,x)=\psi(t,x-vt)e^{i(\frac{v}{2} \cdot x-\frac{|v|^2}{4}t)}$$
is solution to \eqref{eq:mainNLS}. In the specific case of $\psi_{\omega}(x,t)$ this implies that
$$\psi_{Q,v}(x,t)=Q(x-vt)e^{i(\frac{v}{2} \cdot x-\frac{|v|^2}{4}t+\omega t)}$$
is the corresponding travelling wave solution to \eqref{eq:mainNLS} that moves on the line $x=vt$. \\
Moreover we notice that if $\psi=e^{i\omega t}w(x)$ is an arbitrary standing wave, namely $w$ is  solution to \eqref{eq:ellit}, then the energy of the corresponding
boosted solution $\psi_v(t,x)$ fulfills
\begin{equation}\label{eq.boostNLS}
E(\psi_v)= E(w)+\frac{|v|^2}{8}\int_{\R^n}|w|^2 dx +\frac{i}{2} \int_{\R^n} \bar w \left(v  \cdot \nabla  w \right) dx.
\end{equation}
Now we notice that the quantity $ i  \int_{\R^n} \bar w \nabla  w  dx$ corresponds  to $-P(w)$ and from the Heisenberg relations
we have the following identity
\begin{equation}\label{eq:haise}
\frac{d}{dt} <u(t), A (t)u(t)>=i <u(t), [H,A]u(t)>
\end{equation}
where $[H,A]:=HA-AH$ denotes the commutator of $A$ with $H:=-\Delta -|u|^{p-1}$ and  $u(t)$ is an arbitrary solution to \eqref{eq:mainNLS}.
This relation implies with an elementary computation that choosing $A=x$
$$\frac 12 \frac{d}{dt} <u(t), x u(t)>=P(u).$$
 Therefore, fixed $v$ and  $w$ being a standing wave for which clearly   $$\frac 12 \frac{d}{dt} <w(t), x w(t)>=0,$$ then we obtain
 $$i \int_{\R^n} \bar w \nabla  w  dx=0.$$
 This implies immediately that $E(\psi)< E(\psi_v)$.
This simple fact proves  that fixed the mass of the wave, say $||Q_{\omega}(x)||_{L^2}^2=r$, then \emph{ the least energy solution among the standing waves  $w$ and the corresponding travelling waves $\psi_v$ with $\|w\|_{L^2(\R^n)}^2=\|\psi_v\|_{L^2(\R^n)}^2=r$ is given by $Q_{\omega}(x)e^{i \omega t}$ where $\omega$ is fixed by the value of $r$}.\\

\section{Proof of Theorem \ref{maintheorem}}
\begin{lem}\label{four}
For every $f\in L^2(\R)$ there exists one unique
solution $u=A_v f\in H^1(\R)$ to the following linear problem
$$\left(\sqrt{-\Delta } u + i \left(v \partial_x \right) u\right)+(1-v)u=f.$$
Moreover we have the following bounds
\begin{equation}\label{iando0}\|A_v f\|_{L^2(\R)}\leq \alpha_v \|f\|_{H^{-1}(\R)},\end{equation}
\begin{equation}\label{iando}\|A_v f\|_{H^{1/2}(\R)}\leq \alpha_v \|f\|_{L^{5/4}(\R)}\end{equation}
where $\alpha_v=O(\frac 1{(1-v)})$ as $v\rightarrow 1$.
\end{lem}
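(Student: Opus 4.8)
The plan is to solve the equation explicitly on the Fourier side and read off every assertion from pointwise estimates on the resulting multiplier. Taking the Fourier transform and recalling $\mathcal{F}(\sqrt{-\Delta}u)=|\xi|\hat u$ and $\mathcal{F}(iv\partial_x u)=-v\xi\hat u$, the linear problem becomes $m(\xi)\hat u(\xi)=\hat f(\xi)$ with symbol
\begin{equation*}
m(\xi)=|\xi|-v\xi+(1-v).
\end{equation*}
The whole argument rests on the elementary lower bound
\begin{equation*}
m(\xi)\geq (1-v)(1+|\xi|)\geq (1-v)\langle\xi\rangle,\qquad \xi\in\R,
\end{equation*}
where $\langle\xi\rangle=(1+\xi^2)^{1/2}$; it follows from $|\xi|-v\xi=|\xi|(1-v\,\mathrm{sgn}\,\xi)\geq (1-v)|\xi|$, valid precisely because $0<v<1$ (this is the positivity recorded in the Remark above). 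In particular $m(\xi)>0$ for $\xi\neq 0$, so the multiplier $1/m$ is well defined and one sets $A_v f:=\mathcal{F}^{-1}(\hat f/m)$.

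For existence and uniqueness in $H^1(\R)$ I would observe that the lower bound gives $\langle\xi\rangle/m(\xi)\leq (1-v)^{-1}$, whence for $f\in L^2$
\begin{equation*}
\|A_v f\|_{H^1}^2=\int_\R \frac{\langle\xi\rangle^2}{m(\xi)^2}|\hat f(\xi)|^2\,d\xi\leq \frac{1}{(1-v)^2}\|f\|_{L^2}^2<\infty,
\end{equation*}
so $A_v f\in H^1(\R)$; uniqueness is immediate since $m$ vanishes only at the single point $\xi=0$ and any $H^1$ solution must satisfy $\hat u=\hat f/m$ a.e. The bound \eqref{iando0} is the same computation with the weight redistributed: by Plancherel $\|A_v f\|_{L^2}^2=\int |\hat f|^2/m^2$ and $\|f\|_{H^{-1}}^2=\int\langle\xi\rangle^{-2}|\hat f|^2$, so the claim reduces to the pointwise inequality $\langle\xi\rangle/m(\xi)\leq (1-v)^{-1}$ already in hand, yielding $\alpha_v=(1-v)^{-1}=O((1-v)^{-1})$.

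The bound \eqref{iando} is the delicate one, and I expect it to be the main obstacle, because the source space $L^{5/4}$ is not $L^2$-based and Plancherel no longer applies. Here I would argue by duality, exploiting that $m$ is real, so that $A_v$ is self-adjoint. Using $(L^{5/4})^*=L^5$ and $(H^{1/2})^*=H^{-1/2}$ and testing against $\phi\in H^{-1/2}$,
\begin{equation*}
\langle A_v f,\phi\rangle=\langle f,A_v\phi\rangle,\qquad |\langle f,A_v\phi\rangle|\leq \|f\|_{L^{5/4}}\|A_v\phi\|_{L^5},
\end{equation*}
so \eqref{iando} follows once one proves the dual mapping estimate $\|A_v\phi\|_{L^5}\leq \alpha_v\|\phi\|_{H^{-1/2}}$.

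This last estimate I would get by combining the one-dimensional Sobolev embedding $\dot H^{3/10}(\R)\hookrightarrow L^5(\R)$ (indeed $\tfrac15=\tfrac12-\tfrac3{10}$) with one further pointwise bound on the symbol. Using $m\geq(1-v)\langle\xi\rangle$ together with $|\xi|^{3/5}\leq\langle\xi\rangle$,
\begin{equation*}
\|A_v\phi\|_{\dot H^{3/10}}^2=\int_\R \frac{|\xi|^{3/5}}{m(\xi)^2}|\hat\phi|^2\,d\xi\leq \frac{1}{(1-v)^2}\int_\R \frac{|\xi|^{3/5}}{\langle\xi\rangle^2}|\hat\phi|^2\,d\xi\leq \frac{1}{(1-v)^2}\int_\R \langle\xi\rangle^{-1}|\hat\phi|^2\,d\xi,
\end{equation*}
that is $\|A_v\phi\|_{\dot H^{3/10}}^2\leq (1-v)^{-2}\|\phi\|_{H^{-1/2}}^2$. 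The Sobolev inequality then gives $\|A_v\phi\|_{L^5}\leq C(1-v)^{-1}\|\phi\|_{H^{-1/2}}$, and combined with the duality pairing above this proves \eqref{iando} with $\alpha_v=O((1-v)^{-1})$, the Sobolev constant being $v$-independent. The one point that requires care is verifying that the dual estimate is indeed equivalent to \eqref{iando}, i.e.\ that the self-adjointness of $A_v$ and the identification of the dual pairs are legitimate on the stated spaces; everything else is pointwise symbol analysis driven by the single inequality $m(\xi)\geq(1-v)\langle\xi\rangle$.
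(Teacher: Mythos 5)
Your proof is correct and follows essentially the same route as the paper: everything rests on solving explicitly on the Fourier side and on the pointwise symbol bound $|\xi|-v\xi+1-v\ge(1-v)(1+|\xi|)$ (note, as a harmless slip, that $m$ in fact vanishes nowhere, since $m(0)=1-v>0$). The only cosmetic difference is in \eqref{iando}, where the paper bounds $A_v:H^{-1/2}\to H^{1/2}$ directly and then invokes the embedding $L^{5/4}\hookrightarrow H^{-1/2}$, whereas you prove the dual estimate $\|A_v\phi\|_{L^5}\lesssim \alpha_v\|\phi\|_{H^{-1/2}}$ via $\dot H^{3/10}\hookrightarrow L^5$ and dualize; the two formulations are equivalent.
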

\begin{proof}
By using the Fourier transform we have
$$\hat u(\xi)= \frac{\hat f(\xi)}{|\xi| -v\xi +1-v}.$$
By Plancharel we get
$$u\in H^1(\R) \iff (1+ |\xi|) \hat u(\xi)\in L^2(\R) \iff \frac{(1+|\xi|)\hat f(\xi)}{|\xi| -v\xi +1-v}\in L^2(\R),$$
since
$$ \frac{1+|\xi|}{|\xi| -v\xi +1-v} \leq \frac{1}{1-v}.$$

In order to prove the uniform a-priori bound \eqref{iando}
notice that by the computation above we get
$$\|A_v f\|_{H^1(\R)} \leq \alpha_v \|f\|_{L^2(\R)}$$
where
\begin{equation}\label{cv}
\alpha_v=\sup_\xi \frac {1+|\xi|}{\big| |\xi| -v\xi +1-v\big|} = \frac{1}{1-v}
\end{equation}and hence (since we are working with an operator with constant coefficients)
$$\|\langle D\rangle^{-1/2} A_v f\|_{H^1(\R)} \leq
\alpha_v \|\langle D \rangle^{-1/2}f\|_{L^2(\R)}$$
where
$\langle D\rangle^{-1/2}$ is the Fourier multiplier associated with
$(1+|\xi|^2)^{-1/4}$.
In turn it implies
$$\|A_v f\|_{H^{\frac 12}(\R)} \leq
\alpha_v \|f\|_{H^{-\frac 12}(\R)}$$
and hence we conclude since
by Sobolev embedding
$L^{\frac 54}(\R)\subset H^{-\frac 12}(\R)$. Moreover by direct computation
we get by \eqref{cv} the bound $\alpha_v=O(\frac 1{1-v})$.
\end{proof}
\begin{proof}[Proof of Theorem \ref{maintheorem}]$$$$
Let us notice that \eqref{eq:diffeq2a}
is the Euler-Lagrange equation corresponding to the critical point of the following Weinstein functional
$$W(u)=\frac{||u||_{L^5(\R)}^5}{ \left(||u||_{\dot H^{\frac 12}(\R)}^2+i \int_\R \bar u \left(v  \partial_x  u \right) dx \right)^{\frac32} ||u||_{L^2(\R)}^2}.$$
Existence of maximizers for the Weinstein functional follows arguing as in Appendix B of \cite{FL2}. Fixing $v$ let us call the maximizers
$Q_v$. Moreover arguing as in the proof of Theorem 1.1 of \cite{KLR} it is easy to show that
$$i v \int_\R \bar Q_v  \partial_x  Q_v dx <0.$$
Now from Gagliardo-Nirenberg inequalities
$$||u||_{L^5(\R)}^5\leq C_0||u||_{\dot H^{\frac 12}(\R)}^3||u||_{L^2(\R)}^2$$
$$||u||_{L^5(\R)}^5\leq C_v\left(||u||_{\dot H^{\frac 12}(\R)}^2+i v \int_\R \bar u   \partial_x  u  dx\right)^{\frac 32}||u||_{L^2(\R)}^2$$
and choosing $\psi$ having only positive Fourier components such that
$$\left(||\psi||_{\dot H^{\frac 12}(\R)}^2+i v \int_\R \bar \psi  \partial_x  \psi  dx\right)=(1-v)||\psi||_{\dot H^{\frac 12}(\R)}^2$$
we
\begin{equation}\label{eq:ii}
 C_v\geq (1-v)^{-\frac32}\frac{||\psi||_{L^5(\R)}^5}{ \left(||\psi||_{\dot H^{\frac 12}(\R)}^2+i v \int_\R \bar \psi   \partial_x  \psi dx \right)^{\frac32} ||\psi||_{L^2(\R)}^2}.
 \end{equation}
 Upper bound for $C_v$ can be obtained using the decomposition in positive and negative frequencies
 $$ \psi = \psi_+ + \psi_-, $$
 we get
 $$ \left(||\psi_\pm||_{\dot H^{\frac 12}(\R)}^2+i v \int_\R \bar \psi_\pm   \partial_x  \psi_\pm dx\right) = (1\mp v) ||\psi_\pm||_{\dot H^{\frac 12}(\R)}^2$$
 so the standard Gagliardo - Nirenberg inequality
 $$ \|\psi\|_{L^5(\R)}^5 \leq C \|\psi\|_{\dot{H}^{1/2}(\R)}^3 \|\psi\|_{L^2(\R)}^2 $$ and in the case $\psi = \psi_+$ we find
 $$ \|\psi_+\|_{L^5(\R)}^5 \leq  C(1-v)^{-\frac32} \langle (1-v)|D| \psi_+, \psi_+ \rangle^{\frac 32} \|\psi_+\|_{L^2(\R)}^2 $$
 $$ =  C(1-v)^{-\frac32} \left(||\psi_+||_{\dot H^{\frac 12}(\R)}^2+i v \int_\R \bar \psi_+  \partial_x  \psi_+  dx\right)^{3/2}\|\psi_+\|_{L^2(\R)}^2. $$
 In a similar way we find
  $$ \|\psi_-\|_{L^5(\R)}^5 \leq   C(1+v)^{-\frac32} \left(||\psi_-||_{\dot H^{\frac 12}}^2+i v \int_\R \bar \psi_-   \partial_x  \psi_-  dx\right)^{\frac 32}\|\psi_-\|_{L^2(\R)}^2 $$
 so we can conclude that
 $$  \|\psi\|_{L^5(\R)}^5 \leq C \left( \|\psi_+\|_{L^5(\R)}^5 + \|\psi_-\|_{L^5(\R)}^5 \right) \leq $$ $$ \leq C(1-v)^{-\frac32} \left(\underbrace{||\psi_+||_{\dot H^{\frac 12}(\R)}^2+i v \int_\R \bar \psi_+  \partial_x  \psi_+ dx}_{A}\right)^{\frac 32}\|\psi_+\|_{L^2(\R)}^2 + $$
 $$ + C\left(\underbrace{||\psi_-||_{\dot H^{\frac 12}}^2+i  v \int_\R \bar \psi_-   \partial_x  \psi_- dx}_{B}\right)^{\frac 32}\|\psi_-\|_{L^2(\R)}^2 \leq  $$
 $$  \leq C(1-v)^{-\frac32}  \|\psi\|_{L^2(\R)}^2 (A+B)^{\frac 32}.$$
 To this end we use the Plancherel identity and get
 $$ A+B =  \langle ( |\xi| - v \xi) \widehat{\psi_+},\widehat{\psi_+} \rangle_{L^2(\R)} + \langle ( |\xi| - v \xi) \widehat{\psi_-},\widehat{\psi_-} \rangle_{L^2(\R)} = $$
$$ = \langle ( |\xi| - v \xi) \widehat{\psi},\widehat{\psi} \rangle_{L^2(\R)} = ||\psi||_{\dot H^{\frac 12}(\R)}^2+i v \int_\R \bar \psi   \partial_x  \psi  dx.$$
 So we obtain
 $$ ||\psi||_{L^5(\R)}^5\leq C(1-v)^{-\frac32} \left(||\psi||_{\dot H^{\frac 12}(\R)}^2+i v \int_\R \bar \psi  \partial_x  \psi dx\right)^{\frac 32}||\psi||_{L^2(\R)}^2 $$
 for any $\psi \in H^{\frac 12}(\R)$ and we have
 \begin{equation}\label{eq:asy}
 C_v \lesssim (1-v)^{-\frac32}.
 \end{equation}
Now taking $Q_v$ maximizer for $W$, we
can scale $Q_v\rightarrow aQ_v(bx)$ such that $Q_v$ solves
\begin{equation}\label{eq:diffeq}
\sqrt{-\Delta }Q_v  + i v \partial_x Q_v+ (1-v)Q_v- |Q_v|^{3}Q_v=0.
\end{equation}
Notice  that $Q_v$ is now a critical point also of the following functional
$$E_{boost}(u)={\mathcal E}_{hw}(u)+\frac{1}{2}(1-v)||u||_{L^2(\R)}^2+ \frac{i}{2} \int_\R \bar u \left(v  \partial_x  u \right) dx,$$
and therefore a Pohozaev type identity yields to
\begin{equation}
\frac 12 \|Q_v\|_{\dot H^{1/2}(\R)}^2- \frac{3}{10}\| Q_v \|^{5}_{L^{5}(\R)}+\frac{i}{2}  v \int_\R \bar Q_v  \partial_x  Q_v dx=0.
\end{equation}
By the fact that $Q_v$ solves \eqref{eq:diffeq} also
$$ \|Q_v\|_{\dot H^{1/2}(\R)}^2+(1-v) \|Q_v\|_{L^2(\R)}^2- \| Q_v  \|^{5}_{L^{5}(\R)}+i  v \int_\R \bar Q_v  \partial_x  Q_v dx=0$$
we can conclude that
$$\left(||Q_v||_{\dot H^{\frac 12}(\R)}^2+i  v \int_\R \bar Q_v   \partial_x  Q_v  dx\right)= \frac{3}{2}(1-v)\|Q_v\|_{L^2(\R)}^2,$$
\begin{equation}\label{eq.imp1}
   \|Q_v\|_{L^5(\R)}^5=\frac{5}{2}(1-v)\|Q_v\|_{L^2(\R)}^2.
\end{equation}
By recalling that
$$C_v=\frac{||Q_v||_{L^5(\R)}^5}{ \left(||Q_v||_{\dot H^{\frac 12}}^2+i v \int_\R \bar Q_v   \partial_x  Q_v  dx \right)^{\frac32} ||Q_v||_{L^2(\R)}^2}$$
we get
$$C_v=\frac{5(1-v)}{2\left(||Q_v||_{\dot H^{\frac 12}}^2+i  v \int_\R \bar Q_v  \partial_x  Q_v  dx \right)^{\frac32}}$$
and
$$C_v=\frac{\frac52 (\frac23)^{\frac 32}}{(1-v)^{\frac 12}||Q_v||_{L^2(\R)}^3}.$$
Together with \eqref{eq:asy} we conclude that
\begin{equation}\label{eq:ii}
\lim_{v\rightarrow 1}||Q_v||_{L^2(\R)}=0, \ \ \ \lim_{v\rightarrow 1}\left(||Q_v||_{\dot H^{\frac 12}}^2+i v\int_\R \bar Q_v   \partial_x  Q_v dx \right)=0.
\end{equation}

We choose in Lemma \ref{four}
the forcing term $f=Q_v|Q_v|^3$ and hence by looking at the equation solved by $Q_v$ we get
$$A_v(Q_v|Q_v|^3)=Q_v$$
and hence again by Lemma \ref{four} we get
$$\|Q_v\|_{H^{1/2}(\R)}\leq \alpha_v \|Q_v |Q_v|^3\|_{L^{\frac 54}(\R)}
=\alpha_v \|Q_v\|_{L^5(\R)}^4$$
From the fact that  $C_v\geq C (1-v)^{-\frac32}$ it follows that
$$
||Q_v||_{L^2(\R)}^2=O((1-v)^{\frac 23}),   \ ||Q_v||_{L^5(\R)}^5=O((1-v)^\frac{5}{3}).
$$
Since $\alpha_v=O(\frac 1{1-v})$ then we conclude
$$\|Q_v\|_{H^{1/2}(\R)}\leq \alpha_v \|Q_v\|_{L^5(\R)}^4\leq C \frac{(1-v)^\frac{20}{15}}{(1-v)}=O((1-v)^{\frac 13}).$$

To estimate $\|Q_v\|_{H^{1}(\R)} $ we rewrite \eqref{eq:diffeq2a} in  the form
\begin{equation}\label{eq:diffeq2a1}
\sqrt{-\Delta }Q_v +  i v \partial_x Q_v = -(1-v)Q_v +  |Q_v|^{3}Q_v,
\end{equation}
and note that
$$\left\| \sqrt{-\Delta }Q_v +  i v \partial_x Q_v \right\|_{L^2(\R)} \geq  (1-v)\|Q_v\|_{\dot{H^1}(\R)}. $$
We can use the  Sobolev inequality
$$ \|Q_v\|_{L^8(\R)} \leq C \|Q_v\|_{H^{1/2}(\R)} = O((1-v)^{1/3})$$ estimating the $L^2 - $ norm of the right side of \eqref{eq:diffeq2a1} and  deducing
 $$ \|Q_v\|_{\dot{H^1}(\R)} \leq C(1-v)^{1/3}.$$

 Our final step is to take
  $0 < v_1 < v_2 < 1$ with  $$v_1 = 1 - \varepsilon, \ \ v_2 = v_1 + \delta, \ \ 0 < \delta \ll \varepsilon \ll 1,$$ and prove the inequality \eqref{eq.rel1}.
  For the purpose, we set
  $$ R = Q_{v_1} - Q_{v_2} $$ so that it is   solution to
  \begin{eqnarray}\label{eq.difR1}
   & \sqrt{-\Delta} R + (1-v_1)R + i v_1 \partial_x R = \\ \nonumber = &\underbrace{ -\delta Q_{v_2} + i \delta \partial_x Q_{v_2} + \left( \frac{Q_{v_1}|Q_{v_1}|^3 - Q_{v_2}|Q_{v_2}|^3}{Q_{v_1}-Q_{v_2}}\right) R}_{F(Q_{v_1}, Q_{v_2})}.
  \end{eqnarray}
  By using Lemma \ref{four} we rewrite the above equation as
  $$ R = A_{v_1} \left(F(Q_{v_1}, Q_{v_2}) \right)$$
  so taking the $L^2 -$ norm, we can write
  $$ \|R\|_{L^2} \leq \|A_{v_1}(\delta Q_{v_2})\|_{L^2} +  \|A_{v_1}(\delta \partial_x Q_{v_2})\|_{L^2(\R)} +$$
  $$+ \left\| A_{v_1} \left[\left( \frac{Q_{v_1}|Q_{v_1}|^3 - Q_{v_2}|Q_{v_2}|^3}{Q_{v_1}-Q_{v_2}}\right) R\right] \right\|_{L^2(\R)} .$$
     and using \eqref{iando0} we get the inequalities
  $$ \|A_{v_1} \left(\delta Q_{v_2}\right)\|_{L^2(\R)} \leq C \delta \| Q_{v_2}\|_{L^2(\R)}  \leq C \delta (1- v_1)^{1/3} $$
  and
  $$\|A_{v_1}(\delta \partial_x Q_{v_2})\|_{L^2(\R)} = \|A_{v_1}(\delta  Q_{v_2})\|_{H^1(\R)} \leq C \delta \| Q_{v_2}\|_{L^2(\R)}  \leq C \delta (1- v_1)^{1/3}. $$
  Further, we use the pointwise bound
  $$ \left| \frac{Q_{v_1}|Q_{v_1}|^3 - Q_{v_2}|Q_{v_2}|^3}{Q_{v_1}-Q_{v_2}}\right| \leq C \left( |Q_{v_1}|^3 + |Q_{v_2}|^3 \right).$$
Using \eqref{iando0}, we find
$$ \left\| A_{v_1}  \left[\left( \frac{Q_{v_1}|Q_{v_1}|^3 - Q_{v_2}|Q_{v_2}|^3}{Q_{v_1}-Q_{v_2}}\right) R\right] \right\|_{L^2(\R)}  \leq $$
$$ \leq  C \left\|  \left[\left( \frac{Q_{v_1}|Q_{v_1}|^3 - Q_{v_2}|Q_{v_2}|^3}{Q_{v_1}-Q_{v_2}}\right) R\right] \right\|_{H^{-1}(\R)} \leq $$
$$ \leq C \left\|  \left[\left( \frac{Q_{v_1}|Q_{v_1}|^3 - Q_{v_2}|Q_{v_2}|^3}{Q_{v_1}-Q_{v_2}}\right) R\right] \right\|_{L^1(\R)} \leq $$
$$ \leq C  \left\|  \left( |Q_{v_1}|^3 + |Q_{v_2}|^3 \right) R \right\|_{L^1(\R)} \leq $$
$$ \leq C \left(\|Q_{v_1}\|_{L^6(\R)}^3 + \|Q_{v_2}\|_{L^6(\R)}^3 \right) \|R\|_{L^2(\R)}.$$
Applying the Sobolev embedding once more, we get
$$  \|Q_{v_1}\|_{L^6(\R)}^3 + \|Q_{v_2}\|_{L^6(\R)}^3 \leq C \left( \|Q_{v_1}\|_{H^{1/2}(\R)}^3 + \|Q_{v_2}\|_{H^{1/2}(\R)}^3\right) \leq C(1-v_1). $$
Summing up, the above estimates lead to the inequality
 $$ \|R\|_{L^2(\R)} \leq C \delta (1- v_1)^{1/3} + C (1-v_1)\|R\|_{L^2(\R)}$$
 so choosing $1-v_1 = \varepsilon$ so small that $C\varepsilon < 1/2,$ we find
 $$ \|R\|_{L^2(\R)} \leq C \delta (1- v_1)^{1/3}$$
 and this completes the proof of \eqref{eq.rel1}.

Now we show that small data scattering does not occur. More precisely we will show that if the wave operators exists they are not continuous.
For any $0<v_1<v_2<1$ let  $\psi_j(t,x) = e^{\im (1-v_j)t}Q_{v_j}(x-v_jt)$ where  $Q_{v_1}, Q_{v_2} \in  H^{\frac{1}{2}}(\R)$ are the traveling waves constructed above. Then we show that 
\begin{equation}\label{Pit1}
||\psi_1(t) - \psi_2(t)||^2_{L^2(\R)}\sim ||\psi_1(t)||^2_{L^2(\R)}  + ||\psi_2(t)||^2_{L^2(\R)} .
\end{equation}

We can show that for any $\varepsilon>0$  and any $f,g \in L^2(\R)$ with $\|f\|_{L^2(\R)}= \|g\|_{L^2(\R)}$ exists $\tau_0 = \tau(\varepsilon, f, g)$ so that
\begin{equation}\label{eq.Pit2}
   \left|\int_{\R} \overline{f(x)} g(x+\tau) dx \right| \leq \varepsilon, \ \ \forall \tau \geq \tau_0.
\end{equation}
Indeed, this is a consequence of the fact that the weak limit of $\{g(\cdot + \tau ) \}_{\tau \to \infty} $ has to be zero.
The relation \eqref{Pit1}  follows from
$$ \left| e^{\im (1-v_1)t} e^{-\im (1-v_2)t} \int_{\R} \overline{Q_{v_1} (x-v_1 t)}  Q_{v_2} (x-v_2 t) dx \right| = $$ $$ = \left| \int_{\R} \overline{Q_{v_1} (y)}  Q_{v_2} (y-(v_2-v_1) t) dy \right|  $$

so choosing
$$ f(x) = \frac{Q_{v_1}(x)}{\|Q_{v_1}\|_{L^2(\R)}}, \ g(x) = \frac{Q_{v_2}(x)}{\|Q_{v_2}\|_{L^2(\R)}}$$
and applying \eqref{eq.Pit2} we get
$$ ||\psi_1(t) - \psi_2(t)||^2_{L^2(\R)} = ||\psi_1(t)||^2_{L^2(\R)}  + ||\psi_2(t)||^2_{L^2(\R)} - 2 {\rm Re } \langle \psi_1(t), \psi_2(t) \rangle  \geq  $$ $$ \geq ||\psi_1(t)||^2_{L^2(\R)}  + ||\psi_2(t)||^2_{L^2(\R)} - \varepsilon  ||\psi_1(t)||_{L^2(\R)}||\psi_2(t)||_{L^2(\R)} \geq  $$ $$ \geq \frac{1}{2}||\psi_1(t)||^2_{L^2(\R)}  + \frac{1}{2}||\psi_2(t)||^2_{L^2(\R)}.$$
and this
completes the proof. If the wave operators exist and they are $L^2$ bounded, then
$$ \psi_1(0,x)=\psi_1(x)  \ \ \to  W(\psi_1(x))= \phi_1, \psi_2(0,x)=\psi_2(x)  \ \ \to  W(\psi_2(x))= \phi_2$$ satisfy
$$ \lim_{t \to \infty}\|\psi_1(t) - e^{\im t |D|} \phi_1 \|_{L^2(\R)} =  \lim_{t \to \infty}\|\psi_2(t) - e^{\im t |D|} \phi_2 \|_{L^2(\R)} = 0$$
and since
$$ \|e^{\im t |D|} \phi_1 - e^{\im t |D|} \phi_2 \|_{L^2(\R)} = \| \phi_1 -  \phi_2 \|_{L^2(\R)},$$
we would have
\begin{equation}\label{eq.Pit3}
   \|\psi_1(t)- \psi_2(t) \|_{L^2(\R)} \leq C \| \phi_1 -  \phi_2 \|_{L^2(\R)}.
\end{equation}
From this estimate we easily get a contradiction referring to \eqref{Pit1} and choosing $v_2  = v_1 + \delta, \ \delta \ll 1-v_1.$
Indeed,  using \eqref{eq.rel1}  we can write
$$ \|Q_{v_1} - Q_{v_2}\|_{L^2(\R)} \leq C \delta (1-v_1)^{\frac 13},$$
and
$$ \|\psi_1(t)- \psi_2(t) \|^2_{L^2(\R)} \sim \|\psi_1(t) \|^2_{L^2} + \|\psi_2(t) \|^2_{L^2(\R)} = \|Q_{v_1} \|_{L^2(\R)}^2 + \|Q_{v_2} \|_{L^2(\R)}^2 \geq  C (1-v_1)^{2/3}.$$
Choosing
$$ (1-v_1)^{2/3} \gg \delta^2 (1-v_1)^{2/3} \ \  \Longleftrightarrow \ \  \delta \ll 1, $$ we get
$$  \|\psi_1(t)- \psi_2(t) \|^2_{L^2(\R)}  \gg \|Q_{v_1} - Q_{v_2}\|^2_{L^2(\R)}$$
and this contradicts \eqref{eq.Pit3}.

\end{proof}
\section{Nonexistence  of traveling waves moving at the speed of light}
\begin{lem}[Spatial decay for traveling waves for HW]\label{lem: decay}
Let $v=1$ and $Q_1 \in H^{\frac 12}(\R)$ be a  solution to
\begin{equation}\label{eq:diffeq3}
\sqrt{-\Delta }Q_1 + i  \partial_x Q_1+\omega Q_1- |Q_1|^{3}Q_1=0,
\end{equation}
then
$$|Q_1(x)|\leq \frac{C}{1+|x|^2}. $$
\end{lem}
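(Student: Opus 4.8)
The plan is to recast \eqref{eq:diffeq3} as a convolution identity and to read the decay of $Q_1$ off the resolvent kernel of its linear part, the whole difficulty being concentrated in the borderline behaviour of the symbol at $\xi=0$ and at $\xi=+\infty$. Throughout I use the facts recorded in the remark preceding the statement: a solution $Q_1\in H^{\frac12}(\R)$ of \eqref{eq:diffeq3} automatically lies in $H^1(\R)\cap L^\infty(\R)$ with $|Q_1(x)|\to0$, so that $g:=|Q_1|^3Q_1$ belongs to $L^1(\R)\cap L^\infty(\R)$ and, as soon as any algebraic decay of $Q_1$ is known, decays four times as fast. Writing $m(\xi)=|\xi|-\xi+\omega$ for the symbol of $\sqrt{-\Delta}+i\partial_x+\omega$ (I take $\omega>0$, the case in which this operator is invertible and a nontrivial bound state can occur), the equation is equivalent to $Q_1=K*g$ with $\widehat K=1/m$. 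Here $1/m$ equals the constant $1/\omega$ for $\xi\ge0$ and $1/(2|\xi|+\omega)$ for $\xi\le0$; it is continuous on $\R$, smooth off the origin, tends to $1/\omega$ at $+\infty$ and to $0$ at $-\infty$, and has a corner at $\xi=0$, namely a jump $\partial_\xi(1/m)(0^-)-\partial_\xi(1/m)(0^+)=2/\omega^2$ in the first derivative. It is this corner, i.e. the singularity of $|\xi|$ at the origin, that produces the $|x|^{-2}$ tail, exactly as for the ground state of $\sqrt{-\Delta}+1$ and for the profiles $Q_v$, $0<v<1$, of Theorem \ref{maintheorem}.

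Equivalently, and this is the formulation I would use to make the estimates rigorous following Amick--Toland \cite{AT}, I realize $\sqrt{-\Delta}$ as the Dirichlet--to--Neumann map of the harmonic extension $u(x,y)$ of $Q_1$ to the half--plane $\{y>0\}$, so that \eqref{eq:diffeq3} turns into the local elliptic problem $\Delta u=0$ in $\{y>0\}$ with the nonlinear Neumann condition $-\partial_y u+i\partial_x u+\omega u=|u|^3u$ on $\{y=0\}$, $u|_{y=0}=Q_1$.

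The subtle point is that the non--decay of $1/m$ at $+\infty$ forces $K$ to contain, besides an inoffensive Dirac mass, a Cauchy (Hilbert) component of size $c/x$; taken in isolation this would only give $|Q_1|\lesssim 1/|x|$. The key observation is that this slowly decaying contribution cancels. Splitting $Q_1=\Pi_+Q_1+\Pi_-Q_1$ into positive and negative frequencies, \eqref{eq:diffeq3} gives $\Pi_+Q_1=\tfrac1\omega\,\Pi_+g$ and $\Pi_-Q_1=(2|D|+\omega)^{-1}\Pi_-g$; since $\Pi_\pm=\tfrac12(I\pm iH)$, each of these carries a Hilbert tail $\pm\tfrac{c}{x}\,\widehat g(0)$ as $|x|\to\infty$, and because $1/m$ is continuous at $\xi=0$ (no jump, only a corner) the two tails are equal and opposite and cancel in the sum, leaving a remainder governed by the corner, hence $O(|x|^{-2})$. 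I would turn this into a proof by a short bootstrap in the weighted spaces $\langle x\rangle^{\beta}L^\infty(\R)$: from $g\in L^1$ and the exact cancellation one first gets $Q_1=o(1/|x|)$, whence $|g|=|Q_1|^4=o(|x|^{-4})$, so that $\langle x\rangle^2 g\in L^1$, $\widehat g$ is $C^2$ near $\xi=0$, and the corner estimate for the convolution now yields the full rate $|Q_1|\le C/(1+|x|^2)$, which is optimal since the corner prevents any faster decay.

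The hard part is exactly this borderline analysis at $\xi=0$ and $\xi=+\infty$: because $g=|Q_1|^3Q_1$ has only the finite regularity inherited from $Q_1\in H^1$, one cannot appeal to a soft stationary--phase or smooth--symbol argument, and one must genuinely control the action of the nonlocal resolvent on slowly decaying tails and justify the cancellation of the $|x|^{-1}$ term. This is where the Amick--Toland machinery is indispensable: working with the harmonic extension $u$ (equivalently the holomorphic function $u+i\widetilde u$) one first proves that $u$ is bounded and vanishes in the closed half--plane, and then upgrades this successively to boundedness of $\langle x\rangle\,|Q_1|$ and of $\langle x\rangle^{2}|Q_1|$ by analysing the boundary integral equation together with the decay of the Neumann datum $|u|^3u$; the attendant maximum--principle and contour estimates supply rigorously the cancellation that the Fourier side only suggests, and collecting them gives $|Q_1(x)|\le C/(1+|x|^2)$.
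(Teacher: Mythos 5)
Your proposal is correct and follows essentially the same route as the paper: the paper also inverts the linear part via the harmonic extension to the half--plane, computes the Green's function $G(x,y)=\int e^{-y|\xi|}e^{-2\pi i x\xi}\,d\xi/(1+|\xi|+\xi)$, and obtains the bound $|G(x,y)|\leq C(1+y)/(y^{2}+4\pi^{2}x^{2})$ precisely through the cancellation you describe --- the $1/x$ tails of the positive-- and negative--frequency pieces combine into $\frac{1}{y-2\pi i x}+\frac{1}{y+2\pi i x}=\frac{2y}{y^{2}+4\pi^{2}x^{2}}$, which vanishes on the boundary, leaving the $O(x^{-2})$ contribution from the corner of the symbol at $\xi=0$ via integration by parts. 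The concluding step is likewise the Amick--Toland bootstrap: a contraction in the weighted space of functions with $\sup(1+|x|^{\alpha})|f(x)|<\infty$ on the region where $|Q_1|<\delta$, so your kernel $K(x)=G(x,0)$ and frequency--splitting formulation is just the boundary trace of the paper's argument.
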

\begin{proof}
It is evident that if $Q_1$ is solution to \eqref{eq:diffeq3} then  $Q_1 \in H^1(\R)$ and therefore $\lim_{|x|\rightarrow \infty}|Q_1(x)|=0$ and $Q_1$ is bounded. 
The decay for traveling waves moving with speed $|v|<1$ has been proved in \cite{GLPR}. Here we prove that if traveling waves moving at the speed of light exist then they shall fulfill a certain asymptotics. By scaling argument it is clear that $\omega>0$, see e.g. \eqref{eq.imp1}. For semplicity and by scaling we consider \eqref{eq:diffeq3} in the case $\omega=1$. We recall that an harmonic extension for a smooth function $f:\R\rightarrow \C$ is given by a function
 $F:\R^2_{+}=\R\times [0, \infty) \rightarrow \C$
  fulfilling
\begin{equation}\label{eq:ext0}
\partial_x^2 F+\partial_y^2 F=0 \text{ on } \R^2_{+} \ \ \ F(x,0)=f(x).
\end{equation}

It is elementary to notice (using that $F$ is harmonic in $\R^2_{+}$ )that
$$-\partial_y F(x,0)=\sqrt{-\Delta }f(x).$$

Now, given $Q_1$  solution to \eqref{eq:diffeq3} with $\omega=1$ we consider  $q_1(x,y)$ its extension to $\R^2_{+}$ fulfilling
\begin{equation}\label{eq:ext7}
\partial_x^2 q_1+\partial_y^2 q_1=0 \text{ on } \R^2_{+}
\end{equation}
\begin{equation}\label{eq:ext8}
\partial_y q_1-i   \partial_x q_1 =  q_1- |q_1|^{3}q_1 \text{ on } (x,0)
\end{equation}
\begin{equation}\label{eq:ext9}
\lim_{|x| \rightarrow \infty} |q_1(x,0)|=0.
\end{equation}
Our idea it to deduce the decay of $Q_1$ by looking at the decay of the solution $q_1(x,y)$ to the extension problem. The case $v=0$ it has been studied by Amick-Toland \cite{AT} and Kenig-Martel-Robbiano \cite{KMR}.
Consider hence the boundary value problem

\begin{equation}\label{eq:ext4}
\partial_x^2 w+\partial_y^2 w=0 \text{ on } \R^2_{+}
\end{equation}
\begin{equation}\label{eq:ext5}
w-\partial_y w+i v  \partial_x w =  f \text{ on } (x,0)
\end{equation}
\begin{equation}\label{eq:ext6}
\lim_{|x| \rightarrow \infty} |w(x)|=0.
\end{equation}
The crucial point is to show the existence of a function $G(x,y)$ such
that
\begin{enumerate}
\item $G$ is harmonic on $\R^2_{+}$
\item $G-\partial_y G+i   \partial_x G =\frac{1}{\pi}\frac{y}{x^2+y^2}\equiv \frac{1}{y} b_0 \left( \frac{x}{y} \right),$
\end{enumerate}
where
$$ b_0(x)= \frac{1}{\pi(1+x^2)}.$$
Indeed, if $G$ fulfills (1)-(2) then
\begin{equation}\label{eq:decadcr}
w(x,y)=\int_{\R} G(x-t,y)f(t)dt
\end{equation}
is solution to \eqref{eq:ext4}, \eqref{eq:ext5}, \eqref{eq:ext6}.\\
Concerning \eqref{eq:ext7}, \eqref{eq:ext8}, \eqref{eq:ext9}, by choosing $f=-|q_1|^{3}q_1$ we deduce from \eqref{eq:decadcr} that $q_v$ fulfills
\begin{equation}\label{eq:crucrelaz}
q_1(x,y)=\int_{\R} G(x-t,y)|q_1|^{3}q_1(t)dt.
\end{equation}
From this last estimate we shall deduce the decay of $q_v$ using the information about the decay of $G$.\\
In the  case $v=0$ studied in \cite{AT} the function
$G$ is explicitly given by
$$G(x,y)=\frac{1}{\pi}\int_0^{\infty}\frac{e^{-s}(y+s)}{x^2+(y+s)^2}ds.$$
Now if we apply inverse Fourier transform in the variable $x$ we get
\begin{enumerate}
\item $\partial_y^2 \hat G(\xi, y)-\xi^2 \hat G(\xi, y)=0$ on $\R^2_{+}$
\item $\left. \left( \hat G-\partial_y \hat G + \xi \hat G \right) \right\vert_{y=0} = 1,$
\end{enumerate}
We look for solution in the form
$$ \hat G(\xi, y) = e^{-y|\xi|} C(\xi),$$
where $C(\xi)$ is defined by the boundary condition 
$$\left. \left( \hat G-\partial_y \hat G + \xi \hat G \right) \right\vert_{y=0} = 1,$$
i.e.
$$ C(\xi) = \frac{1}{1+|\xi|+\xi}$$
Hence
$$ G(x,y) = \int_{-\infty}^\infty e^{-y|\xi|} e^{-2 \pi \mathrm{i} x \xi} \frac{d\xi}{(1+|\xi|+\xi)} $$

We have
$$G(x,y) =\underbrace{ \int_{-\infty}^0e^{y \xi} e^{-2 \pi \mathrm{i} x \xi} d\xi}_{I_1}+ \underbrace{ \int_{0}^\infty e^{-y\xi} e^{-2 \pi \mathrm{i} x \xi} \frac{d\xi}{(1+2\xi)}}_{I_2}.$$
By fundamental theorem of calculus we obtain immediately that
\begin{equation}\label{eq:pezneg}
I_1=\left(\frac{1}{y-2\pi i x} \right)\int_{-\infty}^0\frac{d}{d\xi}\left(e^{y \xi} e^{-2 \pi \mathrm{i} x \xi}\right) d\xi=\frac{1}{y-2\pi i x}.
\end{equation}
By integration by parts we get
\begin{equation*}
I_2=\left(\frac{1}{-y-2\pi i x} \right)\int_{0}^{\infty}\frac{d}{d\xi}\left(e^{-y \xi} e^{-2 \pi \mathrm{i} x \xi}\right)  \frac{d\xi}{(1+2\xi)}
\end{equation*}
\begin{equation}\label{eq:pezpos}
I_2=\left(\frac{1}{y+2\pi i x} \right)+ \left(\frac{2}{-y-2\pi i x} \right) \int_{0}^{\infty}e^{-y \xi} e^{-2 \pi \mathrm{i} x \xi}  \frac{d\xi}{(1+2\xi)^2}
\end{equation}
such that we obtain
\begin{equation}\label{eq:peztut}
G(x,y)=\left(\frac{2y}{y^2+4\pi^2  x^2} \right)+ \left(\frac{2}{-y-2\pi i x} \right) \int_{0}^{\infty}e^{-y \xi} e^{-2 \pi \mathrm{i} x \xi}  \frac{d\xi}{(1+2\xi)^2}
\end{equation}
that thanks to a second integration by parts can be estimated as
\begin{equation}\label{eq:peztut2}
G(x,y)=\left(\frac{2y}{y^2+4\pi^2  x^2} \right)+ \left(\frac{2}{(y+2\pi i   x)^2} \right)+ \left(\frac{8}{(y+2\pi i   x)^2} \right) \int_{0}^{\infty}e^{-y \xi} e^{-2 \pi \mathrm{i} x \xi}  \frac{d\xi}{(1+2\xi)^3}.
\end{equation}
Eq. \eqref{eq:peztut2} implies the following decay
\begin{equation}\label{eq:peztut3}
|G(x,y)|\leq C \left(\frac{1+y}{y^2+4\pi^2  x^2} \right)
\end{equation}
Now recalling \eqref{eq:crucrelaz}
$$q_1(x,y)=\int_{\R} G(x-t,y)|q_1|^{3}q_1(t)dt,$$
and  following verbatim the proof at pag. 24 in \cite{AT2} we obtain 
$$|Q_1(x)|=|q_1(x,0)|\leq \frac{C}{1+|x|^2}$$
and hence the desired decay. We give a brief sketch of the argument of \cite{AT2} for reader's convenience.\\
Given $\delta>0$, let $X(\delta)>0$ such that $|u(x)|<\delta$ if $|x|>X(\delta)$. Now, let us define, for $\alpha=0$ or $\alpha=2$, the Banach space $C_{\delta}^{\alpha}$ defined by continuous functions defined on $\{ x\in \R, \ \ |x|\geq X(\delta) \}$ such that
$$||f||_{\alpha, \delta}=\sup \{ (1+|x|^{\alpha})|f(x)|, \ \ |x|\geq X(\delta) \}<+\infty.$$
Now thanks to \eqref{eq:crucrelaz} we have
$$q_1(x,0)=\underbrace{\int_{|x|<X(\delta)} G(x-t,0)|q_1|^{3}q_1(t)dt}_{A_{\delta}(x)}+\int_{|x|\geq X(\delta)} G(x-t,0)|q_1|^{3}q_1(t)dt.$$
Defining the operator $T_{\delta}$ defined on $C_{\delta}^{\alpha}$ as
$$T_{\delta}g(x)=\int_{|x|\geq X(\delta)} G(x-t,0)|q_1(t)|^{3}g(t)dt,$$
we have the elementary estimates
$$|A_{\delta}(x)|\leq const\left(\frac{1}{1+x^2} \right), \ \ |T_{\delta}g(x)|\leq const \cdot \delta^3  \left(\frac{||g||_{\alpha, \delta}}{1+|x|^{\alpha}} \right)$$
By the fact that $q_1(x,0)\neq 0$ we can choose $\delta$ sufficently small such that $A_{\delta}\neq 0$. Hence, by the contraction mapping principle, the equation $g=A_{\delta}+T_{\delta}g$ has a unique solution in $C_{\delta}^{\alpha}$ for $\delta$ sufficently small.
By the fact that $q_1(x,0)$ fulfills \eqref{eq:crucrelaz} we get the desired decay.
\end{proof}

\begin{proof}[Proof of Theorem \ref{thm:nofast}]$$$$
By Heisenberg relations we have
$$\frac{d}{dt} <u, A u>=i <u,[\mathcal{H},A]u>$$
where $[\mathcal{H},A]=\mathcal{H}A-A\mathcal{H}$ denotes the commutator and $\mathcal{H}$ is the time dependent operator $\sqrt{-\Delta} -|u|^{3}$.\\
We recall  the definition of the Hilbert transform $H$ defined
$$ H(f)(x) = \int Pv \left( \frac{1}{x-y} \right) f(y) dy = \int_0^\infty \frac{f(x-y)-f(x+y)}{y} \ dy,$$
and fulfilling in Fourier variables
$\widehat{H f}(\xi)=-i \pi sign(\xi)\hat{f}(\xi)$.
We shall use the following commutator relations
\begin{equation}\label{eq.CR1a1}
    [AB,C]=A[B,C]+[A,C]B.
\end{equation}
and
\begin{alignat}{2}
\nonumber & [x,D_x]= \frac{1}{\im} [x,\partial_x] = \im, \\
\label{eq.HTR2}
    & [x,H](f)(x) = \int (x-y) Pv \left(\frac{1}{x-y} \right) f(y) dy = \int f(y) dy, \\
    \nonumber &\partial_x[x,H] = [x,H] \partial_x = 0,  \\ \nonumber
    & [x,\sqrt{-\Delta}] = \frac{1}{\pi}[x, \partial_x H] = \frac{1}{\pi} [x,\partial_x]H + \underbrace{\frac{1}{\pi} [x,H]\partial_x}_{= 0} =  -\frac{H}{\pi}.
\end{alignat}
Therefore from Heisenberg relation and using commutator relations  we get
$$\frac{d}{dt} \int_{\R}x |u(x,t)|^2 dx =i <u,[\sqrt{-\Delta} ,x]u>$$
and  we conclude thanks to Cauchy-Schwarz and Plancherel that
\begin{equation}\label{eq:meangrowth}
\left| \frac{d}{dt} \int_{\R}x |u(x,t)|^2 dx \right| = \left| <u, \frac{H(u)}{\pi}> \right | \leq   \frac{1}{\pi} ||u||_{L^2(\R)}||H(u)||_{L^2(\R)}=||u||_{L^2(\R)}^2.
\end{equation}
By applying \eqref{eq:meangrowth} to $u(x,t)=e^{i \omega t}Q_{v}(x-vt)$  and thanks to Lemma \ref{lem: decay} we get
$$\left| \frac{d}{dt} \int_{\R}x  |Q_{v}(x-vt)|^2 dx \right|=\left| v  \int_{\R} |Q_{v}(x)|^2 dx \right|  \leq  ||Q_v ||_{L^2(\R)}^2,$$
and hence $$|v|\leq 1.$$
By observing that equality in Cauchy-Schwarz means that $u$ and $\frac{H(u)}{\pi}$ are parallel we deduce that if a travelling wave moves at speed $|v|=1$,  i.e. at the speed of light, therefore supp $\hat Q_v\subset (-\infty, 0]$ or supp $\hat Q_v \subset [0, \infty)$.\\
By elementary scaling arguments it is evident that when $v=1$ then  supp $\hat Q_v\subset (-\infty, 0]$ and when  $v=-1$ then  supp 
 $\hat Q_v \subset [0, \infty)$.   
 Let us notice that the equation fulfilled by the traveling wave moving at the speed of light
 $$\sqrt{-\Delta }Q_1 + i  \partial_x Q_1+Q_1- |Q_1|^{3}Q_1=0,$$
 thanks to the fact that  supp $\hat Q_1\subset (-\infty, 0]$, can be rewritten as
\begin{equation}\label{eq:ris}
-2\partial_x Q_1+i Q_1=i |Q_1|^3Q_1,
\end{equation}
 \begin{equation}\label{eq:risc}
2\partial_x \bar Q_1+i \bar Q_1=i |Q_1|^3\bar Q_1.
 \end{equation}
Now thanks to \eqref{eq:ris} and \eqref{eq:risc} we conclude that
$$\partial_x \left(|Q_1|^2\right) =\partial_x (Q_1)\bar Q_1 +\partial_x (\bar Q_1) Q_1=0,$$
which imples that  $Q_1(x)=c$ and hence $Q_1\equiv 0.$
 
\end{proof}

\newpage

\end{document}